\newcommand{\cL}{\mathcal L}
\newcommand{\cD}{\mathcal D}
\newcommand{\fq}{\mathbb {F}_q}
\newcommand{\Z}{\mathbb {Z}}
\newcommand{\bP}{\mathbb {P}}
\newcommand{\N}{\mathbb {N}}
\newtheorem{theorem}{Theorem}[section]
\newtheorem{lemma}[theorem]{Lemma}
\newtheorem{proposition}[theorem]{Proposition}
\theoremstyle{definition}
\newtheorem{example}[theorem]{Example}}
\newcommand{\rmv}[1]{}
\title{One- and Two-Point Codes over Kummer Extensions}
\begin{document}

\author{Ariane M. Masuda, Luciane Quoos, \and Alonso Sep\'ulveda}
\address[A.~M.~Masuda]{Department of Mathematics \\ New York City College of Technology, CUNY \\ 300 Jay Street\\ Brooklyn, NY 11201\\ USA}
\email{amasuda@citytech.cuny.edu}
\address[L. Quoos]{Instituto de Matem\'atica \\ Universidade Federal do Rio de Janeiro \\ Cidade Universit\'aria \\ Rio de Janeiro, RJ 21941-909\\ Brazil }
\email{luciane@im.ufrj.br}
\address[A. Sep\'ulveda]{Faculdade de Matem\'atica\\ Universidade Federal de Uberl\^andia \\ Avenida Jo\~ao Naves de Avila, 2121 \\Uberl\^andia, MG  38408-100 \\Brazil}
{\email{alonso.castellanos@ufu.br}
\thanks{}
\date{\today}
\keywords{Algebraic geometric codes, Kummer extension, Weierstrass semigroup}
\subjclass[2010]{11G20, 14G50, 14H55}
\thanks{The first author was partially supported  by  PSC-CUNY Award \#68121-00 46, jointly funded by The Professional Staff Congress and The City University of New York. The second author was partially supported by CNPq, PDE-200434/2015-2. The third author would like to thank FAPEMIG, APQ-00506-14.}

\begin{abstract}

We compute the Weierstrass semigroup at one totally ramified place for Kummer extensions defined by $y^m=f(x)^{\lambda}$ where $f(x)$ is a separable polynomial over $\fq$. In addition, we compute the Weierstrass semigroup at two certain totally ramified places.  We then apply our results to construct one- and two-point algebraic geometric codes with good parameters.
\end{abstract}

\maketitle

\section{Introduction}
Using tools from algebraic geometry, Goppa~\cite{goppa1} constructed error-correcting linear codes that are nowadays known as algebraic geometric (or AG) codes. Let $\mathcal X$ be a non-singular, projective, geometrically irreducible, algebraic curve of genus $g$ over a finite field $\fq$.  Let $D$ and $G$ be divisors on $\mathcal{X}$ such that $D$ is the sum of $n$ distinct rational points on $\mathcal X$  that are not in the support of $G$; say, $P_1, \ldots, P_n$. There are two classical ways of constructing AG codes using $D$ and $G$. One depends on the Riemann-Roch space $\mathcal{L}(G)$,
  $$
C_{\mathcal{L}}: =C_{\mathcal{L}}(\mathcal{X},D,G)=\{ (f(P_1),\ldots,f(P_n))\mid f\in \mathcal{L}(G)\} \subseteq \fq^n, $$
 and the other one is based on  the space of differentials $\Omega(G-D)$  on $\mathcal{X}$,
 $$C_{\Omega}:= C_{\Omega}(\mathcal{X},D,G)=\{(\mbox{res}_{P_1}(\eta),\ldots,\mbox{res}_{P_n}(\eta))\mid \eta\in \Omega(G-D)\}\subseteq \fq^n.
  $$

\noindent It turns out that $C_{\mathcal{L}}$ and $C_{\Omega}$ are dual codes.

We denote by $d_{\mathcal{L}}$ and  $d_\Omega$ the minimum distance of $C_{\mathcal{L}}$ and $C_{\Omega}$, respectively.  It is well known that
$d_{\mathcal{L}} \geq n- \deg G$ and $d_\Omega \geq \deg G - (2g-2)$
where $\deg G$ is the degree of $G$.
These bounds have been improved when $G$ is a one-point divisor supported on a rational point $P$; see~\cite{goppa2}. In~\cite{GKL,GKL1} Garcia, Kim and Lax observed that these improvements were related to the arithmetical structure of the Weierstrass semigroup at $P$. More specifically, if $\beta, \beta+1, \ldots, \beta+t$ and $\gamma -(t-1),\ldots, \gamma-1, \gamma$ are sequences of consecutive gaps at $P$ with $t \geq 1$ and $\beta+t \leq \gamma$, then for $G=(\beta+\gamma-1)P$ we have
that $d_\Omega \geq  \deg G -(2g - 2) + t + 1.$
A similar result for two-point codes was obtained by Homma and Kim using the notion of pure gaps that they introduced in~\cite{HK}; see Theorem~\ref{puregapscodes}.

Many authors have investigated the minimum distances of one- and two-point codes over specific Kummer extensions with the Hermitian function field being the most well-known one. For instance, Maharaj~\cite{M} studied one-point codes over subcovers of the Hermitian curve. Homma, Kim and Matthews~\cite{HK,GM2} studied two-point codes and applied their results to the Hermitian curve. In~\cite{MST} Munuera, Sep\'ulveda and Torres considered codes over a Castle curve, which is a generalization of the Hermitian curve.
Recently,  Sep\'ulveda and Tizziotti~\cite{ST} studied two-point codes over the $\mathbb{F}_{q^{2\ell}}$-maximal curve whose affine plane model is  given by $y^{q^\ell+1}=x^q+x$.

In this paper we study the Weierstrass semigroup in one and two points over Kummer extensions given by $y^m=f(x)^{\lambda}$ where
 $f(x)$ is a separable polynomial over $\fq$ of degree $r$  and  $\gcd(m, r\lambda)=1$. Then we apply these results to construct one- and two-point codes over Kummer extensions. Note that we do not specify the polynomial $f(x)$ as many authors do, and so our results are very general. In particular, for implementation purposes, this gives a great deal of flexibility as the algebraic curve can be carefully chosen in order to improve either the parameters of the code or the code performance.

The organization of the paper is as follows. In Section 2 we present some preliminary results on Kummer extensions. In Section 3 we obtain the Weierstrass semigroup at a totally ramified place using Proposition~\ref{main} as the main ingredient. In Section 4 we compute the Weierstrass semigroup at two points, $P$ and $P_{\infty}$, where $P$ is a totally ramified point in a Kummer extension. We also give conditions to obtain pure gaps. Finally, in Section 5 we use our results to construct one- and two-point codes that have better parameters when compared with the corresponding ones in the MinT's Tables~\cite{MinT}.

\section{Preliminary results}

Let $\fq$ be a finite field of characteristic $p$, $K$ be the algebraic closure of $\fq$, and $F/K$ be an algebraic function field in one variable of genus $g$. We denote by $\bP_F$ the set of places of $F$ and by $\cD_F$ the free abelian group generated by the places of $F$. An element in $\cD_F$ is called a divisor. By writing a divisor as
$
D=\sum\limits_{P\in \bP_F} n_P\,P$  with $n_P\in \Z $ and almost all  $n_P=0$, we have that
 the degree of $D$ is $\deg D=\sum\limits_{P\in \bP_F}n_P.$
The Riemann-Roch vector space over $\fq$ associated to $D$ is defined by
$
\cL(D)=\{z\in F\mid (z)\ge -D\}\cup \{0\}
$
where $(z)$ is the divisor of the function $z$.  The  dimension of $\cL(D)$ is denoted by $\ell(D)$. From the Riemann-Roch Theorem, it follows that
$\ell(D) = \deg D+1-g$ if  $\deg D\ge 2g-1$.
For $P$ in $\bP_F$, the Weierstrass  semigroup at $P$ is
$H(P)=\{n\in \N_0\mid  (z)_\infty=nP \text{ for some } z\in F\}$
 where $(z)_\infty$ is the pole divisor of the function $z$.
A non-negative integer $n$ is called a non-gap at $P$ if $n\in H(P)$, and a gap at $P$ otherwise.
The Weierstrass Gap Theorem asserts that $s$ is a gap at a place $P$ of degree one  if and only if $\ell((s-1)P)=\ell(sP)$. As a consequence, there exist $g$ gaps at $P$, say $s_1,\ldots,s_g$, satisfying $1=s_1<\cdots<s_g\le 2g-1$. We let $G(P)$ be the set of such $g$ gaps.

Next we extend the notion to that of a non-gap at two distinct rational places $P_{1}$ and $P_{2}$  in $\bP_F$. The Weierstrass semigroup at $(P_{1}, P_{2})$ is the set
$$
H(P_{1}, P_{2}) = \{(n_{1}, n_{2}) \in \mathbb{N}_0^2 \mid (z)_{\infty} = n_{1} P_{1} + n_{2} P_{2} \text{ for some } z\in F\}.
$$
Analogously, the set $G(P_{1}, P_{2})= \mathbb{N}_0^2 \setminus H(P_{1}, P_{2})$ is called the  Weierstrass gap set at $(P_{1}, P_{2})$.

Unlike the one-point case, the cardinality of $G(P_{1}, P_{2})$ depends on the choice of the points $P_{1}$ and $P_{2}$; see~\cite{kim}. Homma~\cite{H} found bounds for the cardinality of $G(P_{1}, P_{2})$ as well as a connection between $H(P_{1}, P_{2})$ and a permutation of the set $\{1,2, \ldots, g\}$ that we will discuss in Section~\ref{semigroup}.

An important subset of $G(P_{1}, P_{2})$ is the set of pure gaps.  A pair of non-negative integers $(s_1,s_2)$ is said to be a pure gap at $(P_1, P_2)$ if
$$\ell (s_1P_1+s_2P_2)= \ell((s_1-1)P_1+s_2P_2)=\ell(s_1P_1+(s_2-1)P_2).$$
From~\cite[Proposition 2.4]{HK}, we have that $(s_1,s_2)$ is a pure gap at $(P_1, P_2)$ if and only if
$$\ell (s_1P_1+s_2P_2)= \ell((s_1-1)P_1+(s_2-1)P_2).$$ We denote by $G_0(P_1,P_2)$ the set of pure gaps at $(P_1,P_2)$.
Pure gaps can be used to improve the bound on the  minimum distance of AG codes.

\begin{theorem}[{\cite[Theorem 3.3]{HK}}]\label{puregapscodes}
Let $P_1$ and $P_2$ be rational points. For $t_1,t_2\in\N$, suppose that
$$\{(k_1, k_2) \mid \beta \leq k_1 \leq \beta+t_1\text{ and } \gamma \leq k_2\leq \gamma + t_2\} \subseteq G_0(P_1, P_2).$$
 If $G=(2\beta+t_1-1)P_1+(2\gamma + t_2-1)P_2$, then
$d_\Omega \geq \deg G -(2g - 2) + t_1 +t_2 + 2.$
\end{theorem}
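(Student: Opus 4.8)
The plan is to bound the minimum distance $d_\Omega$ of the differential code $C_\Omega(\mathcal{X}, D, G)$ from below by exhibiting, for each codeword, enough independent residue conditions. Recall the standard Goppa bound $d_\Omega \geq \deg G - (2g-2)$ comes from the fact that a nonzero differential $\eta \in \Omega(G - D)$ with at most $w$ nonzero residues on the support of $D$ can be used to build a differential in a space whose dimension forces $\deg G - (2g-2) \leq w$. The goal here is to improve this by the additive gain $t_1 + t_2 + 2$, and the mechanism for that gain is precisely the assumed rectangular block of pure gaps at $(P_1, P_2)$.

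First I would set up the residue framework: suppose $\eta \in \Omega(G - D)$ is a nonzero differential whose residue vector has exactly $w$ nonzero entries, supported at places $P_{i_1}, \ldots, P_{i_w}$ among the $P_1, \ldots, P_n$. Let $D' = P_{i_1} + \cdots + P_{i_w}$. Then $\eta \in \Omega(G - D')$, so the index computation gives $w = \deg D' \geq \ell(G - D') - \ell(G - D') + \text{(correction)}$; more precisely I want to compare $\ell(G)$ with $\ell(G - D')$ and use that $\Omega(G-D') \neq 0$ means, via the differential–Riemann-Roch correspondence, that $\ell(K - G + D') > 0$ where $K$ is a canonical divisor. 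The heart of the argument is to show that the pure-gap hypothesis forces the subtraction of the corner point contributions to be "free," i.e. that removing one more point from $D'$ does not decrease the relevant Riemann-Roch dimension until we have extracted $t_1 + t_2 + 2$ extra units.

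The key step is to translate the rectangular pure-gap block into a statement about Riemann-Roch dimensions along a lattice path. Writing $G = (2\beta + t_1 - 1)P_1 + (2\gamma + t_2 - 1)P_2$, I would consider the divisors $G_{a,b} = (2\beta + t_1 - 1 - a)P_1 + (2\gamma + t_2 - 1 - b)P_2$ as $a$ and $b$ increase, and use the equivalent pure-gap characterization from \cite[Proposition 2.4]{HK}, namely that $\ell(s_1 P_1 + s_2 P_2) = \ell((s_1 - 1)P_1 + (s_2 - 1)P_2)$ for every $(s_1, s_2)$ in the block. Chaining these equalities along a staircase path from the corner $(\beta, \gamma)$ outward should show that each of the $t_1 + t_2 + 2$ consecutive steps through pure gaps contributes no dimension increase, so that $\ell(G) = \ell(G - D'')$ for a suitable effective divisor $D''$ supported on $\{P_1, P_2\}$ of degree contributing the full gain. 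Comparing this with the forced dimension drop coming from the $w$ residue conditions then yields $\deg G - (2g - 2) + t_1 + t_2 + 2 \leq w = d_\Omega$ after taking the minimum over nonzero codewords.

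I expect the main obstacle to be the careful bookkeeping that converts the \emph{pure}-gap equalities (which control both the $P_1$- and $P_2$-directions simultaneously) into a single clean inequality with the correct constant $t_1 + t_2 + 2$ rather than $t_1 + t_2 + 1$ or $t_1 + t_2$. The "$+2$" is subtle: it reflects the two corners of the rectangular block, and getting both endpoints of the staircase to contribute requires that the block be genuinely two-dimensional and that the outer boundary divisors $G + P_1$, $G + P_2$ still interact correctly with the canonical divisor. I would handle this by arguing that the differential $\eta$ vanishing to the prescribed orders at $P_1$ and $P_2$ (forced by $G$) together with the pure-gap condition at the block corners lets one produce a nonzero element of $\mathcal{L}(K - G + D' - \Delta)$ for $\Delta = (t_1 + 1)P_1 + (t_2 + 1)P_2$ whenever $w$ is too small, yielding a contradiction; verifying the non-vanishing of this last space is where the pure-gap hypothesis is used in full strength.
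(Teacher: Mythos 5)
First, a remark on the comparison itself: the paper gives no proof of this statement. It is quoted verbatim from Homma and Kim \cite[Theorem 3.3]{HK}, so your attempt can only be measured against their original argument, not against anything in this paper.

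Your outer frame is the right one: take a nonzero $\eta\in\Omega(G-D)$ whose residue vector has weight $w$, let $D'$ be the sum of the $w$ places with nonzero residue, and aim to show that $\eta$ in fact lies in $\Omega(G+\Delta-D')$ with $\Delta=(t_1+1)P_1+(t_2+1)P_2$, which gives $2g-2\ge\deg G+t_1+t_2+2-w$ and hence the bound; your last paragraph even names the correct dual space $\mathcal{L}(K-G+D'-\Delta)$. The genuine gap is the middle step, where you propose to chain the pure-gap equalities $\ell(s_1P_1+s_2P_2)=\ell((s_1-1)P_1+(s_2-1)P_2)$ ``along a staircase path'' to conclude $\ell(G)=\ell(G-D'')$. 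This cannot work as described, for two concrete reasons. First, the hypothesis only controls divisors $k_1P_1+k_2P_2$ with $\beta\le k_1\le\beta+t_1$ and $\gamma\le k_2\le\gamma+t_2$, whereas $G$ has coefficients $2\beta+t_1-1$ and $2\gamma+t_2-1$, which lie entirely outside that rectangle; no chain of the assumed equalities ever reaches $G$. Second, each such equality moves diagonally (both coordinates drop by one), so a chain through a $(t_1+1)\times(t_2+1)$ block has length at most $\min(t_1,t_2)+1$ and cannot account for the additive gain $t_1+t_2+2$. The actual mechanism in \cite{HK} is dual and pointwise rather than a chain: writing $W=(\eta)$, one supposes $v_{P_1}(W)=2\beta+t_1-1+a$ with $0\le a\le t_1$ and $v_{P_2}(W)=2\gamma+t_2-1+b$ with $0\le b\le t_2$, and applies the Riemann--Roch theorem to the single pure gap $(\beta+a,\gamma+b)$: the defining equality forces $\ell(W-B+P_1+P_2)=\ell(W-B)+2$ for $B=(\beta+a)P_1+(\gamma+b)P_2$, producing differentials whose vanishing orders at $P_1$ and $P_2$ contradict the choices of $a$ and $b$, and thereby pushing $v_{P_1}(W)\ge 2\beta+2t_1$ and $v_{P_2}(W)\ge 2\gamma+2t_2$. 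This is exactly where the decomposition $2\beta+t_1-1=(\beta-1)+(\beta+t_1)$ behind the choice of $G$, and the full two-dimensional block of pure gaps, are used; your sketch identifies the destination but leaves this step, which is the whole content of the theorem, unproved.
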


A Kummer extension is a field extension $F/K$ defined by
$y^m=f(x)^{\lambda}=\prod\limits_{i=1}^{r}(x-\alpha_i)^{\lambda}$
 such that $m \geq 2$, $p\nmid m$,  $f(x) \in \fq[x]$, $\alpha_1,\ldots,\alpha_r \in K$ are pairwise distinct and $\gcd(m,r\lambda)=1$.
Let $Q_1, \dots , Q_r$ denote the places of the rational function field $K(x)$ associated to the zeros $\alpha_1,\ldots,\alpha_r$ of $f(x)$,  respectively, and let $Q_\infty \in \mathbb{P}_{K(x)}$ be the only pole of $x$. Since they are totally ramified in the extension $F/K(x)$, we denote by $P_i$ the unique place in $\mathbb{P}_F$ over $Q_i$ for $i=1, \dots ,r, \infty$. We note that the curve has genus $g=(m-1)(r-1)/2$; see~\cite[Proposition 3.7.3]{Sti}.

We build on recent work of Abd\'on, Borges and Quoos where they obtained an arithmetical criterion for an integer to be a gap number at a totally ramified point $P$.  Below we state their result restricted to the setting in which we are interested here.
\begin{proposition}[{\cite[Corollary 3.6]{ABQ}}]\label{main}
Let $y^m=f(x)^{\lambda}$ be a Kummer extension where
 $f(x)$ is a separable polynomial over $\fq$ of degree $r$ and  $\gcd(m, r\lambda)=1$.
A non-negative integer $s$ is a gap number  at a totally ramified place $P$ if and only if
$$r\left\lbrace\frac{t\lambda}{m}\right\rbrace>1+\left\lfloor\frac{s-1}{m}\right\rfloor$$
where $t \in \{0,\dots ,m-1 \}$ is the only solution to  $s+\lambda t \equiv 0 \pmod{m}$.
\end{proposition}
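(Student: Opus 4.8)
The plan is to compute pole orders at $P$ summand by summand in the integral basis $1,y,\dots,y^{m-1}$ of $F$ over $K(x)$. I take $P=P_i$ to be the place lying over a root $\alpha_i$ of $f$ (the place $P_\infty$ over the pole of $x$ is handled by the same method and gives the numerical semigroup generated by $m$ and $r$). Since $P$ is totally ramified with index $m$ and $\gcd(m,r\lambda)=1$ forces $\gcd(m,\lambda)=1$, the relation $y^m=\prod_{j}(x-\alpha_j)^{\lambda}$ yields $v_P(x-\alpha_i)=m$ and $v_P(y)=\lambda$, while at $P_\infty$ one gets $v_{P_\infty}(x)=-m$ and $v_{P_\infty}(y)=-r\lambda$; at the remaining places $y$ and $x-\alpha_i$ are regular and nonzero as needed.

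The crucial structural observation is a \emph{non-cancellation} lemma. Writing a function as $z=\sum_{b=0}^{m-1}u_b(x)\,y^b$ with $u_b\in K(x)$, at any totally ramified place $Q$ the valuations $v_Q(u_b y^b)$ are pairwise incongruent modulo $m$: indeed $v_Q(y^b)=b\,v_Q(y)$ with $v_Q(y)$ coprime to $m$, so the residues $b\,v_Q(y)\bmod m$ for $0\le b\le m-1$ are distinct. Hence $v_Q(z)=\min_b v_Q(u_b y^b)$, and I may treat each summand separately. At the unramified places $\{1,y,\dots,y^{m-1}\}$ is a local integral basis, so regularity there forces each $u_b$ to have poles only among the roots $\alpha_j$ and at infinity.

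Next I would characterize which $z$ have pole divisor supported only at $P_i$ by imposing $v_{P_j}(u_b y^b)\ge 0$ for $j\ne i$ and $v_{P_\infty}(u_b y^b)\ge 0$ on every summand. The first condition allows $u_b$ a pole of order at most $\mu_b:=\lfloor b\lambda/m\rfloor$ at each $\alpha_j$ with $j\ne i$; the second bounds its degree at infinity by $-\lceil br\lambda/m\rceil$; the pole of $u_b$ at $\alpha_i$ is free and produces the pole at $P_i$. Writing the pole order of the $b$-th summand at $P_i$ as $me_b-b\lambda$, where $e_b$ is the pole order of $u_b$ at $\alpha_i$, a degree count on an admissible numerator shows that $e_b$ can be any integer with $e_b\ge \mu_b+\lceil r\rho_b/m\rceil$, where $\rho_b:=b\lambda-m\mu_b=m\{b\lambda/m\}$. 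Thus the non-gaps in the residue class $-b\lambda\pmod m$ are exactly $\{\,me_b-b\lambda : e_b\ge \mu_b+\lceil r\rho_b/m\rceil\,\}$, whose least element is $m\lceil r\rho_b/m\rceil-\rho_b$.

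Finally I would translate this into the stated inequality. Given $s\ge 1$, let $t\in\{0,\dots,m-1\}$ be the unique solution of $s+\lambda t\equiv 0\pmod m$, so $s$ falls in the class handled by $b=t$ and $s+\rho_t\equiv 0\pmod m$; write $s=mK-\rho_t$ with $K\ge 1$. Then $s$ is a gap exactly when it is smaller than the least non-gap in its class, i.e. $mK-\rho_t< m\lceil r\rho_t/m\rceil-\rho_t$, equivalently $K<\lceil r\rho_t/m\rceil$, equivalently $K<r\rho_t/m=r\{t\lambda/m\}$. Since $0\le\rho_t\le m-1$ gives $\lfloor(s-1)/m\rfloor=K-1$, this reads $r\{t\lambda/m\}>1+\lfloor(s-1)/m\rfloor$, as claimed; summing the counts over residue classes recovers $g=(m-1)(r-1)/2$ gaps as a check. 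The main obstacle is the third step: one must verify not merely an upper bound on pole orders but the \emph{existence} of an admissible numerator realizing each claimed value, which is exactly where the interaction between the allowed poles at the other roots $\alpha_j$ and the degree constraint at $P_\infty$ must be handled with care. The floor/ceiling rearrangement in the last step is then routine.
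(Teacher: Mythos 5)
The paper does not actually prove this proposition---it is quoted verbatim from \cite[Corollary 3.6]{ABQ}---so there is no in-paper argument to compare against; your proposal amounts to an independent, self-contained proof of the cited result, and it is correct. The two pillars both hold: (i) at a totally ramified place $Q$ the valuations $v_Q(u_b y^b)=m\,v_{Q\cap K(x)}(u_b)+b\,v_Q(y)$ are pairwise incongruent mod $m$ because $v_Q(y)\in\{\lambda,-r\lambda\}$ is coprime to $m$, and at the unramified places $1,y,\dots,y^{m-1}$ is a local integral basis (the discriminant $\pm m^m f^{\lambda(m-1)}$ is a unit there), so a function with poles only at $P_i$ forces every summand $u_by^b$ to have poles only at $P_i$; (ii) the step you flag as the main obstacle does go through: $u_b=f(x)^{-\mu_b}(x-\alpha_i)^{\mu_b-e_b}$ realizes every $e_b\ge\mu_b+\lceil r\rho_b/m\rceil$, while the degree-zero condition on principal divisors of $K(x)$ ($0\ge -e_b-(r-1)\mu_b+\lceil br\lambda/m\rceil$) rules out anything smaller, and $\lceil br\lambda/m\rceil=r\mu_b+\lceil r\rho_b/m\rceil$ turns this into exactly your threshold. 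The final floor/ceiling translation ($K<\lceil r\rho_t/m\rceil\iff K<r\rho_t/m$ for integer $K$, and $\lfloor(s-1)/m\rfloor=K-1$) is right, and your genus check $\sum_{\rho=1}^{m-1}(\lceil r\rho/m\rceil-1)=(m-1)(r-1)/2$ confirms the count. One remark worth making explicit: although the statement says ``a totally ramified place,'' the formula is false for $P_\infty$ in general (e.g.\ $m=5$, $r=3$, $\lambda=1$, $s=4$ is declared a non-gap but $4\notin\langle 3,5\rangle$), and you correctly restrict to the places over the zeros of $f$, which is also the only way the paper uses the proposition (in the proof of Theorem~\ref{pesos} the case $P=P_\infty$ is handled separately via $\langle m,r\rangle$). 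Your route differs from the source \cite{ABQ}, which argues via orders of regular differentials, but it is arguably more elementary and yields the explicit generators $y^b f^{-\mu_b}(x-\alpha_i)^{\mu_b-e_b}$ of the non-gaps as a bonus.
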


We will also use a result due to Maharaj~\cite{M} to compute $G_0(P_1,P_2)$ at two rational points of $F$.  In order to do that, we will need
another definition. For any field $E$ with $K \subseteq E\subseteq F$, write the divisor $D$ of $F$ as
$$
D = \sum\limits_{R\in \bP_E}\,\sum_{\substack{{Q\in \bP_F}\\{Q|R}}}\, n_Q\, Q.
$$
We define the restriction of $D$ to $E$ as
$$
D\Big|_{E}= \sum\limits_{R\in \bP_E} \min\,\left\{\left\lfloor\frac{n_Q}{e(Q|R)}\right\rfloor\colon
{Q|R}\right\}\,R
$$
where $e(Q|R)$ is the ramification index of $Q$ over $R$.

\begin{theorem}[{\cite[Theorem 2.2]{M}}] \label{ThMaharaj}
Let $F/K$ be a Kummer extension of degree $m$ defined by $y^m=f(x)$.  Then, for any divisor $D$ of $F$ that is invariant by the action of $Gal(F/K(x))$, we have that
$$ \mathcal{L}(D)= \bigoplus\limits_{t=0}^{m-1} \mathcal L\left([D+(y^t)]\Big|_{K(x)}\right)\,y^t$$
where $[D+(y^t)]\Big|_{K(x)}$ denotes the restriction of the divisor $D+(y^t)$ to $K(x)$.
\end{theorem}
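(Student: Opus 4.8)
The plan is to exploit the Galois structure of the Kummer extension and decompose $\mathcal{L}(D)$ into character eigenspaces. Since $p\nmid m$ and $K$ is algebraically closed, $K$ contains a primitive $m$-th root of unity $\zeta$, the group $\mathrm{Gal}(F/K(x))$ is cyclic of order $m$, and we may fix a generator $\sigma$ acting by $\sigma(y)=\zeta y$ while fixing $K(x)$ pointwise. The set $\{1,y,\dots,y^{m-1}\}$ is a $K(x)$-basis of $F$, and $\sigma(y^{t})=\zeta^{t}y^{t}$, so $K(x)\,y^{t}$ is exactly the $\zeta^{t}$-eigenspace of $\sigma$ acting on $F$. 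The first step is to note that because $D$ is invariant under the group action, so is $\mathcal{L}(D)$: for $z\in F$ one has $(\sigma z)=\sigma((z))$, hence $(z)\ge -D$ if and only if $(\sigma z)\ge -D$.

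Second, I would diagonalize the action. As a finite-dimensional $K$-vector space on which $\sigma$ acts $K$-linearly with $\sigma^{m}=\mathrm{id}$, and since $X^{m}-1$ splits into distinct linear factors over $K$, the operator $\sigma$ is diagonalizable and $\mathcal{L}(D)=\bigoplus_{t=0}^{m-1}\mathcal{L}(D)_{t}$, where $\mathcal{L}(D)_{t}$ is the $\zeta^{t}$-eigenspace. Expanding an element of $\mathcal{L}(D)_{t}$ in the basis above and comparing eigenvalues forces it to have the form $z_{t}\,y^{t}$ with $z_{t}\in K(x)$. Rewriting the condition $(z_{t})+(y^{t})\ge -D$ as $(z_{t})\ge -(D+(y^{t}))$ identifies $\mathcal{L}(D)_{t}$ with $\bigl(\mathcal{L}_{F}(D+(y^{t}))\cap K(x)\bigr)\,y^{t}$.

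The technical heart, and the step I expect to be the main obstacle, is the lemma
$$\mathcal{L}_{F}(A)\cap K(x)=\mathcal{L}_{K(x)}\!\left(A\big|_{K(x)}\right)$$
for any divisor $A$ of $F$, applied with $A=D+(y^{t})$. To prove it I would argue place by place: for $u\in K(x)$, a place $R$ of $K(x)$, and a place $Q\mid R$ of $F$ with ramification index $e(Q|R)$, one has $v_{Q}(u)=e(Q|R)\,v_{R}(u)$. Writing $A=\sum_{Q}a_{Q}\,Q$, the inequality $v_{Q}(u)+a_{Q}\ge 0$ is equivalent, since $v_{R}(u)\in\Z$, to $v_{R}(u)\ge -\lfloor a_{Q}/e(Q|R)\rfloor$; imposing it over all $Q\mid R$ gives $v_{R}(u)\ge -\min_{Q\mid R}\lfloor a_{Q}/e(Q|R)\rfloor$, which is precisely the defining inequality for membership in $\mathcal{L}_{K(x)}(A|_{K(x)})$. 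This is where the floor and the minimum in the definition of the restriction are forced upon us.

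Finally, combining the eigenspace decomposition with the lemma yields
$$\mathcal{L}(D)=\bigoplus_{t=0}^{m-1}\mathcal{L}_{K(x)}\!\left([D+(y^{t})]\big|_{K(x)}\right)y^{t},$$
and the sum is genuinely direct because $1,y,\dots,y^{m-1}$ are $K(x)$-linearly independent. The only points demanding care are the diagonalizability of $\sigma$, which uses $p\nmid m$ together with $\zeta\in K$, and the valuation bookkeeping in the lemma; the remainder is formal manipulation of the defining conditions for the Riemann--Roch spaces.
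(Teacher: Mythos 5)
Your proof is correct. Note, however, that the paper does not prove this statement at all: it is quoted verbatim as Theorem 2.2 of Maharaj's paper \cite{M} and used as a black box, so there is no internal proof to compare against. Your argument --- Galois-invariance of $\mathcal{L}(D)$, diagonalization of the generator $\sigma$ with $\sigma(y)=\zeta y$ into the eigenspaces $K(x)\,y^t$, and the place-by-place lemma $\mathcal{L}_F(A)\cap K(x)=\mathcal{L}_{K(x)}(A|_{K(x)})$ explaining the floor-and-minimum in the definition of the restricted divisor --- is exactly the standard proof given in Maharaj's original paper, and all the steps (including the integrality argument that converts $e(Q|R)\,v_R(u)\ge -a_Q$ into $v_R(u)\ge -\lfloor a_Q/e(Q|R)\rfloor$) check out.
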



\section{The Weierstrass Semigroup $H(P)$}

In this section we calculate the Weierstrass semigroup $H(P)$ for  a rational place $P\in\mathbb{P}_F$.

\begin{proposition}\label{divisor} Let $F/K$ be a  Kummer extension given by $y^m=f(x)^{\lambda}=\prod\limits_{i=1}^{r}(x-\alpha_i)^{\lambda}$ where $f(x)$ is a separable polynomial of degree $r$ and $\gcd(m,r\lambda)=1$. Then we have the following divisors in $F$:
\begin{enumerate}
\item $(x-\alpha_i)=mP_i-mP_\infty$  for every $i$, $1\le i \le r$,
\item $(y)=\lambda P_1+\cdots + \lambda P_r-r\lambda P_\infty$,
\item $(f(x))=\sum\limits_{i=1}^rmP_i - rmP_\infty$, and
\item $(z)_\infty=rP_\infty$ for some function $z \in F$.
\end{enumerate}
\end{proposition}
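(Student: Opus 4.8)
The plan is to read off parts (1) and (3) from the ramification data, deduce (2) by extracting an $m$-th root at the level of divisors, and then produce an explicit function for (4), which is the only part requiring an idea.

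First I would establish (1). In the rational function field $K(x)$ the principal divisor of $x-\alpha_i$ is $Q_i-Q_\infty$, consisting of a simple zero at $\alpha_i$ and a simple pole at infinity. Since $Q_i$ and $Q_\infty$ are totally ramified in $F/K(x)$ with ramification index $m$, applying the conorm map $\mathrm{Con}_{F/K(x)}$ sends $Q_i\mapsto mP_i$ and $Q_\infty\mapsto mP_\infty$, so that $(x-\alpha_i)=mP_i-mP_\infty$ in $F$. For (3), since $f(x)=\prod_{i=1}^r(x-\alpha_i)$ and the divisor of a product is the sum of the divisors, summing (1) over $i$ immediately gives $(f(x))=\sum_{i=1}^r mP_i-rmP_\infty$.

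For (2), I would use the defining equation $y^m=f(x)^\lambda$, which yields $m(y)=(y^m)=(f(x)^\lambda)=\lambda(f(x))=m\bigl(\lambda\sum_{i=1}^r P_i-r\lambda P_\infty\bigr)$ by (3). Because $\cD_F$ is a free abelian group, hence torsion-free, I can cancel the common factor $m$ to conclude $(y)=\lambda\sum_{i=1}^r P_i-r\lambda P_\infty$, with no extra support appearing. The substantive step is (4): exhibiting a function whose pole divisor is exactly $rP_\infty$. The obvious candidates $x$ and $y$ have pole orders $m$ and $r\lambda$ at $P_\infty$ and no other poles, but neither produces pole order $r$ on its own, so the point is to exploit the relation $y^m=f(x)^\lambda$. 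Since $\gcd(m,r\lambda)=1$ forces $\gcd(m,\lambda)=1$, I can choose $a\in\{1,\dots,m\}$ with $a\lambda\equiv 1\pmod m$ and set $b=(a\lambda-1)/m\ge 0$. Then $z=y^a/f(x)^b$ satisfies, using (2) and (3),
$$(z)=a(y)-b(f(x))=(a\lambda-bm)\sum_{i=1}^r P_i + r(bm-a\lambda)P_\infty=\sum_{i=1}^r P_i-rP_\infty,$$
so that $(z)_\infty=rP_\infty$, as required.

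The main obstacle is precisely recognizing that (4) cannot be read off from the generators of smallest pole order and instead requires the combination $y^a/f(x)^b$ dictated by the congruence $a\lambda\equiv 1\pmod m$; once that choice is made the divisor computation is routine, and parts (1)--(3) are standard consequences of total ramification and the torsion-freeness of $\cD_F$.
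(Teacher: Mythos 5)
Your proposal is correct and follows essentially the same route as the paper: parts (1)--(3) are read off from total ramification and the defining equation, and part (4) is obtained from a B\'ezout relation for $\gcd(m,\lambda)=1$ applied to a ratio of powers of $y$ and $f(x)$ (the paper writes $am-b\lambda=1$ and uses $f(x)^a/y^b$, the mirror image of your $y^a/f(x)^b$ with $a\lambda-bm=1$). The divisor computations agree, so no further comment is needed.
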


\begin{proof}
Items (1) and (2) follow from the definition of the curve, and (3) follows from (1). Since $\gcd(m,\lambda)=1$ and we can suppose that
$0<\lambda<m$, there exist $a,b \geq 1$ such that $am-b\lambda=1$. Thus   \[(f(x)/y)=(m-\lambda)
P_1 +\cdots +(m- \lambda)   P_r -r(m -\lambda) P_\infty,\]
and it follows that $\left(\dfrac{f(x)^{a}}{y^{b}}\right)_{\infty}=r(am-b\lambda)P_\infty=rP_\infty$. This shows (4).
\end{proof}

Abd\'on, Borges and Quoos determined the Weierstrass semigroup at totally ramified places for Kummer extensions
defined  by $y^m=f(x)$ where $f(x)$ is a separable polynomial~\cite[Theorem 6.1]{ABQ}. Next we extend their result.

\begin{theorem}\label{pesos}
Let $F/K$ be a  Kummer extension given by
$y^m=f(x)^{\lambda}$ where  $f(x)$ is a separable polynomial of degree $r$ with $\gcd(m,r\lambda)=1.$
Suppose that the genus $g$ satisfies $g \geq 1$ and $P$ is a totally ramified place  in $\mathbb{P}_F$. Then:
\[H(P)=
\begin{cases}
\mathbb{N}_0 \setminus \left\{1+i+mj \,|\, 0 \leq i \leq m-2-\lfloor m/r\rfloor, \, 0\leq j \leq r-2- \lfloor r(i+1)/m\rfloor\right \}, &\hspace{-0.5cm}\text{ if } P\ne P_\infty \\
\langle m,r\rangle, &\hspace{-0.5cm}\text{ if } P=P_\infty.
\end{cases}\]
\end{theorem}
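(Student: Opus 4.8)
The plan is to treat the two totally ramified places separately: for $P=P_\infty$ I would combine Proposition~\ref{divisor} with a gap-count against the genus, while for $P\neq P_\infty$ I would feed the parametrization $s=1+i+mj$ into the arithmetic criterion of Proposition~\ref{main} and reduce everything to floor/fractional-part bookkeeping. Throughout I use that $\gcd(m,r\lambda)=1$ forces $\gcd(m,r)=1$, and that $g\ge 1$ together with $m\ge 2$ forces $r\ge 2$, so $r\nmid m$ and $m/r\notin\mathbb{Z}$.

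For $P=P_\infty$, I would first read off from Proposition~\ref{divisor}(1) that $x$ has pole divisor $mP_\infty$, so $m\in H(P_\infty)$, and from Proposition~\ref{divisor}(4) that there is $z$ with $(z)_\infty=rP_\infty$, so $r\in H(P_\infty)$. Hence the numerical semigroup $\langle m,r\rangle\subseteq H(P_\infty)$. Since $\gcd(m,r)=1$, Sylvester's formula gives exactly $(m-1)(r-1)/2$ gaps for $\langle m,r\rangle$, which equals the genus $g=(m-1)(r-1)/2$. As $H(P_\infty)$ has precisely $g$ gaps and its gap set is contained in that of $\langle m,r\rangle$, a cardinality argument forces the two gap sets to coincide, whence $H(P_\infty)=\langle m,r\rangle$.

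For $P\neq P_\infty$, the only totally ramified places left are $P_1,\dots,P_r$, and since the criterion in Proposition~\ref{main} does not depend on the index $i$, it is enough to analyze one such $P$. Writing each $s\in\mathbb{N}$ as $s=qm+u$ with $q\ge 0$ and $u\in\{0,\dots,m-1\}$, I would note that for $u\ge 1$ one has $\lfloor (s-1)/m\rfloor=q$, while the unique $t$ with $s+\lambda t\equiv 0\pmod m$ satisfies $\{t\lambda/m\}=(m-u)/m$ (and $=0$ when $u=0$). When $u=0$ the inequality of Proposition~\ref{main} fails, so every multiple of $m$ is a non-gap; when $u\ge 1$ I set $i=u-1\in\{0,\dots,m-2\}$ and $j=q$, giving $s=1+i+mj$, and the gap condition $r(m-u)/m>1+q$ rewrites as the strict inequality $j< r-1-r(i+1)/m$.

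The remaining work is to convert this strict inequality into the stated closed form. Because $\gcd(m,r)=1$ and $0<i+1<m$, the quantity $r(i+1)/m$ is never an integer, so its fractional part lies in $(0,1)$; this turns $j<r-1-r(i+1)/m$ into the equivalent integer bound $0\le j\le r-2-\lfloor r(i+1)/m\rfloor$. Likewise, non-integrality of $m/r$ shows that this range is nonempty exactly when $i\le m-2-\lfloor m/r\rfloor$, matching the constraint on $i$. Checking that $(i,j)\mapsto 1+i+mj$ is injective (the residue $1+i=s\bmod m$ recovers $i$ and then $j=\lfloor s/m\rfloor$) ensures no gap is double-counted, and collecting these gaps produces exactly the set removed from $\mathbb{N}_0$ in the statement. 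I expect the only delicate point to be precisely these strict-versus-nonstrict and floor-versus-fractional-part conversions, which is where the hypothesis $\gcd(m,r)=1$ does all the real work; as a consistency check one may verify that the resulting gap count again equals $(m-1)(r-1)/2=g$.
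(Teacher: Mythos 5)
Your proof is correct and takes essentially the same route as the paper: the case $P=P_\infty$ via Proposition~\ref{divisor} together with the genus count for $\langle m,r\rangle$, and the case $P\neq P_\infty$ by substituting $s=1+i+mj$ into Proposition~\ref{main} and converting the strict bound $j<r-1-r(i+1)/m$ into the stated floor inequalities. The only (harmless) quibble is that the strict-to-floor conversions hold regardless of whether $r(i+1)/m$ or $m/r$ is an integer, since $j<N-f$ with $f\in[0,1)$ already yields $j\le N-1-\lfloor f\rfloor$ in either case, so $\gcd(m,r)=1$ is doing less work there than you suggest.
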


\begin{proof}
We begin with the case $P\neq P_\infty$.
Let $s \in \mathbb{N}_0$ and $t \in \{0,\ldots,m-1\}$ be such that $t\lambda = m-s \pmod m$.
By writing $s-1=m\lfloor(s-1)/m\rfloor+i$ with $0 \leq i \leq m-1$, we obtain that
$m-s=m\left(-\lfloor(s-1)/m\rfloor\right)+m-i-1$, and since $0 \leq m-i-1 \leq m-1$, we conclude that $\lbrace t\lambda/m\rbrace=(m-i-1)/m$.
From Proposition~\ref{main}, we have that $s=m\lfloor(s-1)/m\rfloor+i+1$ is a gap at $P$ if and only if
$$\left\lfloor\frac{s-1}{m}\right\rfloor+ 1<\frac{r(m-i-1)}{m},$$
that is, $\left\lfloor(s-1)/m\right\rfloor<r-1-r(i+1)/m$. Therefore, the gap numbers at $P$ are of the form $s=mj+i+1$ where $0\le i \le m-1$ and $0\leq j < r-1-r(i+1)/m$. In particular,  $r-1-r(i+1)/m>0$ implies that $i<m-1-m/r$. In addition, since $i$ and $j$ are integers, we have the following intervals: $0\le i \le m-2-\lfloor m/r \rfloor$ and $0\le j \le r-2- \lfloor r(i+1)/m\rfloor$.

Finally, if $P$ is over the place $ Q_\infty \in \mathbb{P}_{K(x)}$, then by Proposition~\ref{divisor} we have that the semigroup $\langle m,r\rangle$ is contained in  $H(P_\infty)$. Since $\langle m,r\rangle$ has genus $(m-1)(r-1)/2$, which is also  the genus of the curve, we conclude that $H(P_\infty)=\langle m,r\rangle$.
\end{proof}

We observe that for a totally ramified place $P$ except for $P_\infty$, by setting $j=0$ and $m-1-\lfloor m/r\rfloor \leq i \leq m-1$, the consecutive numbers
$m-\lfloor m/r\rfloor, m-\lfloor m/r\rfloor+1, \dots, m$ are non-gaps at $P$. Moreover, in some instances, this sequence generates the entire Weierstrass non-gap semigroup. That is the case in~\cite[Proposition 1]{ST} where Sep\'ulveda and Tizziotti  showed that, for $P=(0:0:1)$ on the curve $y^{q^\ell+1}=x^q+x$ over $\mathbb{F}_{q^{2\ell}}$ and $\ell$ odd,  one has
$$H(P)=\langle q^\ell-q^{\ell-1}+1,q^\ell-q^{\ell-1}+2,\ldots,q^\ell,q^\ell+1\rangle.$$
As a consequence of Theorem~\ref{pesos}, we are able to generalize this result. We will use a result that allows us to compute the genus of a semigroup generated by consecutive integers.

\begin{proposition}[{\cite[Remark 2.6]{CT}}]
\label{semigrupo}
A semigroup $H$  generated by $t$ consecutive numbers, $n, n+1, \dots , n+t-1$, has genus $g(H)= \frac{J(2(n-1)-(J-1)(t-1))}{2}$ where $J= \big\lceil \frac{n-1}{t-1}\big\rceil$.
\end{proposition}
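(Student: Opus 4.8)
The plan is to describe the elements of $H$ explicitly as a union of integer intervals indexed by the number of generators used, count the integers falling into the gaps between consecutive intervals, and sum the resulting arithmetic progression. I would treat $t\geq 2$ as a standing hypothesis, which is what makes $J$ well defined and guarantees that $H$ is a numerical semigroup (the generators being consecutive, their gcd is $1$).

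First I would observe that any element of $H$ written as a sum of exactly $k$ generators (with repetition) lies in $[kn,\,k(n+t-1)]$, and that \emph{every} integer in this range is attained: starting from the sum $kn$ one raises a single summand by $1$ at each step until reaching $k(n+t-1)$, and at every stage the summands remain among $n,\ldots,n+t-1$. Hence the set of elements of $H$ expressible with exactly $k$ summands is precisely the interval $I_k=[kn,\,kn+k(t-1)]$ of $k(t-1)+1$ consecutive integers, with $I_0=\{0\}$, and therefore $H=\bigcup_{k\geq 0} I_k$.

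Next I would count the gaps, which are exactly the integers lying strictly between the right endpoint $kn+k(t-1)$ of $I_k$ and the left endpoint $(k+1)n$ of $I_{k+1}$. A direct count shows this block contains $n-1-k(t-1)$ integers when that quantity is positive, and none otherwise; for $k=0$ this recovers the initial gaps $1,\ldots,n-1$. Because the block size $n-1-k(t-1)$ strictly decreases in $k$, once it becomes nonpositive the intervals $I_k$ overlap or abut, so $H$ then contains every larger integer and no gap can reappear. Thus all gaps arise from indices $k$ with $n-1-k(t-1)>0$, i.e. $k<(n-1)/(t-1)$, which is exactly $k\in\{0,1,\ldots,J-1\}$ with $J=\lceil (n-1)/(t-1)\rceil$.

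Summing over these indices gives
$$g(H)=\sum_{k=0}^{J-1}\bigl(n-1-k(t-1)\bigr)=J(n-1)-(t-1)\frac{(J-1)J}{2}=\frac{J\bigl(2(n-1)-(J-1)(t-1)\bigr)}{2},$$
which is the claimed value. The only genuinely delicate step is the first one: verifying that each $I_k$ is a \emph{full} interval of integers and that the gap-block sizes are monotone in $k$, so that no omitted integer resurfaces as a gap once the intervals begin to overlap. Everything after that is a routine evaluation of an arithmetic series.
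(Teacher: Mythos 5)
Your proof is correct. The paper supplies no argument for this statement---it is imported verbatim from Carvalho and Torres \cite{CT}---so there is no in-paper proof to compare against. Your argument is the standard one and is complete: the decomposition of $H$ into the full integer intervals $I_k=[kn,\,kn+k(t-1)]$, the identification of the gaps as the integers strictly between consecutive interval endpoints with block sizes $n-1-k(t-1)$ decreasing in $k$ so that exactly the indices $0\le k\le J-1$ contribute, and the closing arithmetic-series evaluation all check out (for instance, for $H=\langle 7,8,9\rangle$ it reproduces the $12$ gaps listed in Example~\ref{ex1maximal}). The hypothesis $t\ge 2$ that you make explicit is indeed needed for $J$ to be defined and is implicit wherever the paper invokes the result.
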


\begin{theorem}\label{casobom}
Let $F/K$ be a  Kummer extension $y^m=f(x)^{\lambda}$ where $f(x)$ is a separable polynomial of degree $r$ and $m=rt +1$ with $t \geq 1$. If $P\neq P_\infty $ is a totally ramified place then
\begin{equation}\label{hp}
H(P)=\langle m-\lfloor m/r\rfloor, m-\lfloor m/r\rfloor+1, \dots, m\rangle.
\end{equation}
\end{theorem}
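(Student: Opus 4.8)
The plan is to realize the right-hand side of \eqref{hp} as a subsemigroup of $H(P)$ and then force equality by a genus count. Write $S:=\langle m-\lfloor m/r\rfloor,\dots,m\rangle$. Since $m=rt+1$ with $r\ge 2$ (the case $r=1$ giving a rational curve, which is excluded), one has $\lfloor m/r\rfloor=\lfloor t+1/r\rfloor=t$, so the generating set of $S$ consists of the $t+1$ consecutive integers $m-t,\,m-t+1,\dots,m$, whose smallest element is $m-t=t(r-1)+1$. The inclusion $S\subseteq H(P)$ is then immediate from the observation following Theorem~\ref{pesos}: each of $m-t,\dots,m$ is a nongap at $P$, and $H(P)$ is closed under addition, so it contains the numerical semigroup they generate. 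Note also that $S$ has finite genus, since $t+1\ge 2$ consecutive integers are coprime.

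Next I would turn this inclusion into the statement $g(S)\ge g(H(P))=g=(m-1)(r-1)/2=rt(r-1)/2$: because $S\subseteq H(P)$, every gap of $H(P)$ is a gap of $S$, so $S$ has at least as many gaps as $H(P)$. To obtain the reverse inequality I would compute $g(S)$ directly using Proposition~\ref{semigrupo}, applied to the $t+1$ consecutive generators starting at $n=m-t=t(r-1)+1$. In the notation of that proposition the number of generators is $t+1$, so $n-1=t(r-1)$ and the step $J$ is $J=\lceil t(r-1)/((t+1)-1)\rceil=\lceil t(r-1)/t\rceil=r-1$, the ceiling being exact since $t\mid t(r-1)$. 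Substituting $J=r-1$ into the genus formula gives $g(S)=\tfrac{(r-1)\bigl(2t(r-1)-(r-2)t\bigr)}{2}=\tfrac{(r-1)\cdot tr}{2}=\tfrac{rt(r-1)}{2}$.

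Combining the two computations yields $g(S)=rt(r-1)/2=g=g(H(P))$. Finally I would invoke the elementary fact that a subsemigroup of a numerical semigroup with the same finite genus must coincide with it: the complements $\mathbb{N}_0\setminus S\supseteq \mathbb{N}_0\setminus H(P)$ are nested finite sets of equal cardinality, hence equal, so $S=H(P)$, which is exactly \eqref{hp}.

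The only delicate points are bookkeeping rather than substance: matching the indexing of Proposition~\ref{semigrupo} (whose semigroup is generated by $t$ consecutive numbers) against our $t+1$ generators, and verifying that $\lceil t(r-1)/t\rceil=r-1$ is exact. Once $J=r-1$ is secured, the genus arithmetic is routine, and I expect no real obstacle beyond this careful substitution and the identification $\lfloor m/r\rfloor=t$.
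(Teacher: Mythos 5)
Your proposal is correct and follows essentially the same route as the paper: the inclusion $\langle m-\lfloor m/r\rfloor,\dots,m\rangle\subseteq H(P)$ via the $j=0$ non-gaps from Theorem~\ref{pesos}, followed by a genus comparison using Proposition~\ref{semigrupo} to force equality. You simply carry out explicitly (and correctly, with $J=r-1$ and $g(S)=rt(r-1)/2$) the arithmetic that the paper leaves to the reader.
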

\begin{proof}
Let $H=\langle m-\lfloor m/r\rfloor, m-\lfloor m/r\rfloor+1, \dots, m\rangle$. By choosing $j=0$ and $ m-\lfloor m/r\rfloor \leq i+1 \leq m$ in Theorem~\ref{pesos},
one sees that $H \subseteq H(P)$. The genus  of the curve $y^{m}=f(x)^{\lambda}$ is $g=(m-1)(r-1)/2$.  On the other hand, from Proposition~\ref{semigrupo}, the genus of the semigroup $H$ and the genus of the curve are the same, and so $H=H(P)$.
\end{proof}

Let $t$ and $r$ be integers such that $t\ge 1$ and $r\ge 2$.  For $m=rt +1$,  the Weierstrass semigroup~(\ref{hp}) is symmetric exactly when $m=r+1$. In fact, by Theorem~\ref{pesos},  the largest gap occurs when $j=r-2$ and $i=t-1$. One can check that this gap is equal to $2g-1$ if and only if $t=1$. A computer experiment suggests that the only values of $m$ for which (\ref{hp}) holds are the ones described in Theorem~\ref{casobom}.

\section{The Weierstrass Semigroup $H(P_1,P_2)$}\label{semigroup}

Let $P_{1}$ and $P_{2}$ be rational points in $\mathbb{P}_F$, and $\beta_{1} < \beta_{2} < \cdots < \beta_{g}$  and
 $\gamma_{1} < \gamma_{2} < \cdots < \gamma_{g}$ be the gap sequences at $P_{1}$ and $P_{2}$, respectively. For each $i$, we let $n_{\beta_i} = \min \{ \gamma \in \mathbb{N}_{0} \mid (\beta_i, \gamma ) \in H(P_{1}, P_{2}) \}$. Then $\{ n_{\beta} \mid \beta \in G(P_{1}) \} = G(P_{2})$ by~\cite[Lemma 2.6]{kim}.  So there exists a permutation $\sigma$ of the set $\{1,2, \ldots , g\}$ such that $n_{\beta_{i}} = \gamma_{\sigma(i)}$. The graph of the bijective map between $G(P_{1})$ and $G(P_{2})$ is the set
$
\Gamma(P_{1}, P_{2})  = \{ (\beta_{i} , n_{\beta_i}) \mid  i=1,2, \ldots ,g\} = \{ (\beta_{i} , \gamma_{\sigma(i)}) \mid  i=1,2, \ldots ,g \}.
$

\begin{lemma}[{\cite[Lemma 2]{H}}] \label{lemma 1}
Let $\Gamma '$ be a subset of $(G(P_{1}) \times G(P_{2})) \cap H(P_{1},P_{2})$. If there exists a permutation $\tau$ of $\{ 1, 2, \ldots , g\}$ such that $\Gamma ' = \{ (\alpha_{i} , \beta_{\tau(i)}) \mid  i=1,2, \ldots ,g \}$, then $\Gamma ' = \Gamma(P_{1}, P_{2})$.
\end{lemma}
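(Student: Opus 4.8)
The plan is to exploit the minimality built into the definition of $n_{\beta_i}$ together with a simple counting argument, rather than any deeper structural property of $H(P_1,P_2)$. Throughout I identify the points of $\Gamma'$ with pairs $(\beta_i,\gamma_{\tau(i)})$, where $\beta_1<\cdots<\beta_g$ enumerate $G(P_1)$ and $\gamma_1<\cdots<\gamma_g$ enumerate $G(P_2)$, so that the permutation $\tau$ records how the $P_1$-gaps are matched to the $P_2$-gaps inside $\Gamma'$.

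First I would record the inequality coming from minimality. By hypothesis each pair $(\beta_i,\gamma_{\tau(i)})$ lies in $H(P_1,P_2)$. Since $n_{\beta_i}=\min\{\gamma\in\mathbb{N}_0 \mid (\beta_i,\gamma)\in H(P_1,P_2)\}$ by definition, the membership $(\beta_i,\gamma_{\tau(i)})\in H(P_1,P_2)$ forces
\[
n_{\beta_i}\le \gamma_{\tau(i)}\qquad\text{for every } i=1,\ldots,g.
\]

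Next I would observe that both sides are rearrangements of one and the same set. On the left, $\{n_{\beta_1},\ldots,n_{\beta_g}\}=G(P_2)$ by \cite[Lemma 2.6]{kim}, which is exactly the fact recalled before the statement and realized by the permutation $\sigma$. On the right, $\{\gamma_{\tau(1)},\ldots,\gamma_{\tau(g)}\}=G(P_2)$ because $\tau$ is a permutation of $\{1,\ldots,g\}$ and the $\gamma_j$ enumerate $G(P_2)$; here the hypothesis $\Gamma'\subseteq(G(P_1)\times G(P_2))$ is what guarantees that the second coordinates are genuine $P_2$-gaps. Thus $(n_{\beta_i})_{i}$ and $(\gamma_{\tau(i)})_{i}$ are two enumerations of the finite set $G(P_2)$ satisfying the termwise inequality above.

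The final step is an elementary lemma: if $(u_i)_{i=1}^g$ and $(v_i)_{i=1}^g$ are both bijective enumerations of a single finite set and $u_i\le v_i$ for all $i$, then $u_i=v_i$ for all $i$. This follows by summing, since $\sum_i u_i=\sum_i v_i$ forces each inequality $u_i\le v_i$ to be an equality. Applying it with $u_i=n_{\beta_i}$ and $v_i=\gamma_{\tau(i)}$ yields $n_{\beta_i}=\gamma_{\tau(i)}$ for every $i$, so that $(\beta_i,\gamma_{\tau(i)})=(\beta_i,n_{\beta_i})$ and hence $\Gamma'=\{(\beta_i,n_{\beta_i})\mid i=1,\ldots,g\}=\Gamma(P_1,P_2)$. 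I do not expect a serious obstacle; the only point needing care is confirming that the two tuples are indeed permutations of the common set $G(P_2)$, which is precisely where the membership hypothesis and the cited result from \cite{kim} enter.
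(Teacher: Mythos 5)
The paper does not prove this lemma at all; it is quoted verbatim from Homma \cite[Lemma 2]{H}, so there is no internal proof to compare against. Your argument is correct and is essentially the standard one: the minimality in the definition of $n_{\beta_i}$ gives $n_{\beta_i}\le\gamma_{\tau(i)}$, both $g$-tuples enumerate $G(P_2)$ (using \cite[Lemma 2.6]{kim} on one side and the permutation hypothesis on the other), and the summation argument forces termwise equality, hence $\Gamma'=\Gamma(P_1,P_2)$.
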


Given $\Gamma (P_{1}, P_{2})$, we can compute $H(P_{1}, P_{2})$ in the following way. For $\mathbf{x} = (\beta_{1}, \gamma_{1})$ and $\mathbf{y} = (\beta_{2}, \gamma_{2}) \in \mathbb{N}_{0}^2$, the \textit{least upper bound} of $\mathbf{x}$ and $\mathbf{y}$ is defined as $\mathrm{lub}(\mathbf{x},\mathbf{y})= (\max\{\beta_{1}, \beta_{2}\}, \max\{\gamma_{1}, \gamma_{2}\})$.
In~\cite{kim} it was shown that if $\mathbf{x},\mathbf{y} \in H(P_{1}, P_{2})$ then $\mathrm{lub}(\mathbf{x},\mathbf{y}) \in H(P_{1}, P_{2})$. Moreover, we have the following result.

\begin{lemma}[{\cite[Lemma 2.2]{kim}}] \label{lemma 2}
Let $P_{1}$ and $P_{2}$ be two distinct rational points. Then
$$H(P_{1},P_{2}) = \{ \mathrm{lub} (\mathbf{x},\mathbf{y}) \mid \mathbf{x},\mathbf{y} \in \Gamma(P_{1}, P_{2}) \cup (H(P_{1}) \times \{0\}) \cup (\{0\} \times H(P_{2})) \}.$$
\end{lemma}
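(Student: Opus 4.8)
The plan is to establish the two inclusions separately. The inclusion $\supseteq$ is routine, while the content lies in showing that \emph{every} element of $H(P_1,P_2)$ is realized as a least upper bound of two elements drawn from $\Gamma(P_1,P_2)\cup(H(P_1)\times\{0\})\cup(\{0\}\times H(P_2))$.

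For $\supseteq$ I would first check that each of the three sets on the right lies in $H(P_1,P_2)$: this holds for $\Gamma(P_1,P_2)$ by the definition of $n_\beta$, and for $(n_1,0)$ with $n_1\in H(P_1)$ because a function $z$ with $(z)_\infty=n_1P_1$ also has $(z)_\infty=n_1P_1+0\cdot P_2$; the case $\{0\}\times H(P_2)$ is symmetric. Since $H(P_1,P_2)$ is closed under $\mathrm{lub}$ (the fact recalled just before the statement), every $\mathrm{lub}(\mathbf{x},\mathbf{y})$ with $\mathbf{x},\mathbf{y}$ in this union lies in $H(P_1,P_2)$.

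For the reverse inclusion I would fix $(n_1,n_2)\in H(P_1,P_2)$ and distinguish cases according to the gap status of the coordinates. If $n_1\in H(P_1)$ and $n_2\in H(P_2)$, then $(n_1,n_2)=\mathrm{lub}((n_1,0),(0,n_2))$. If $n_1\in G(P_1)$ and $n_2\in H(P_2)$, then the minimality of $n_{n_1}$ together with $(n_1,n_2)\in H(P_1,P_2)$ forces $n_2\ge n_{n_1}$, whence $(n_1,n_2)=\mathrm{lub}\big((n_1,n_{n_1}),(0,n_2)\big)$ with $(n_1,n_{n_1})\in\Gamma(P_1,P_2)$. The remaining two cases, in which $n_2\in G(P_2)$, cannot be settled this way: a gap of $P_2$ cannot be carried by an element of $\{0\}\times H(P_2)$, so I must be able to read $\Gamma(P_1,P_2)$ off its second coordinate. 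This is the main obstacle.

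To overcome it I would prove that for $\gamma\in G(P_2)$ the integer $m_\gamma=\min\{\beta\mid(\beta,\gamma)\in H(P_1,P_2)\}$ (well defined since $\gamma=n_\beta$ for some $\beta\in G(P_1)$) satisfies $(m_\gamma,\gamma)\in\Gamma(P_1,P_2)$. The two ingredients are that $m_\gamma\in G(P_1)$ and that $\gamma\mapsto m_\gamma$ is injective, both via a leading-coefficient cancellation: if $m_\gamma$ were a non-gap of $P_1$, subtracting a suitable scalar multiple of a function with pole divisor $m_\gamma P_1$ from one with pole divisor $m_\gamma P_1+\gamma P_2$ lowers the order at $P_1$ while preserving the order $\gamma$ at $P_2$, contradicting minimality; the same device applied to two functions sharing the first coordinate proves injectivity. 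As $|G(P_1)|=|G(P_2)|=g$, the set $\{(m_\gamma,\gamma)\mid\gamma\in G(P_2)\}$ is then the graph of a bijection between the gap sets contained in $(G(P_1)\times G(P_2))\cap H(P_1,P_2)$, so Lemma~\ref{lemma 1} identifies it with $\Gamma(P_1,P_2)$. With this symmetric reading available, the remaining cases follow: using $n_1\ge m_{n_2}$ (from minimality and $(n_1,n_2)\in H(P_1,P_2)$), one gets $(n_1,n_2)=\mathrm{lub}\big((n_1,0),(m_{n_2},n_2)\big)$ when $n_1\in H(P_1)$, and $(n_1,n_2)=\mathrm{lub}\big((n_1,n_{n_1}),(m_{n_2},n_2)\big)$ when $n_1\in G(P_1)$, completing the proof.
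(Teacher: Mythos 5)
Your argument is correct, but there is nothing in the paper to compare it against: the paper states this as a quoted result of Kim (\cite[Lemma 2.2]{kim}) and gives no proof, so you have in effect reconstructed the proof from the cited source. Your two inclusions are both sound. The containment $\supseteq$ correctly reduces to the lub-closedness of $H(P_1,P_2)$ recalled just before the lemma. For $\subseteq$, the key step --- that the ``column minima'' $m_\gamma=\min\{\beta\mid(\beta,\gamma)\in H(P_1,P_2)\}$ for $\gamma\in G(P_2)$ recover $\Gamma(P_1,P_2)$ from the second coordinate --- is exactly what is needed, and your leading-coefficient cancellation establishes both $m_\gamma\in G(P_1)$ and injectivity of $\gamma\mapsto m_\gamma$; note that the difference $f-cg$ cannot be constant because it retains the pole of order $\gamma\geq 1$ at $P_2$, which is the (implicit) reason the cancellation genuinely produces an element of $H(P_1,P_2)$ with smaller first coordinate. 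Combined with $|G(P_1)|=|G(P_2)|=g$ and Lemma~\ref{lemma 1}, this identifies $\{(m_\gamma,\gamma)\}$ with $\Gamma(P_1,P_2)$, and your four-case lub decomposition then covers every element of $H(P_1,P_2)$. The only cosmetic remark is that the case $n_2\in G(P_2)$, $n_1\in H(P_1)$ could also be handled by symmetry with your second case once the symmetric description of $\Gamma(P_1,P_2)$ is in hand, but as written the argument is complete.
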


Next we describe the graph of the bijective map between $G(P_{\infty})$ and $G(P)$ where $P$ is a totally ramified place.

\begin{theorem} \label{beta_ij}
Let $F/K$ be a  Kummer extension defined by $y^m=f(x)^{\lambda}$ where $f(x)$ is a separable polynomial of degree 
$r$ with $\gcd(m,r\lambda)=1$.
For a totally ramified place $P$,  we have that
$$\Gamma(P_{\infty}, P) = \left\{ (mr-mj-ri, i+m(j-1)) \mid  1 \leq i \leq m-1-\lfloor m/r\rfloor, \, 1\leq j \leq r-1- \lfloor ri/m\rfloor \right\}.$$
\end{theorem}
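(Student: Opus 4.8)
The plan is to fix $P=P_1$, the totally ramified place lying over a root $\alpha_1$ of $f$ (the case $P\ne P_\infty$ of Theorem~\ref{pesos}), and then apply Homma's criterion, Lemma~\ref{lemma 1}. Writing $\Gamma'$ for the set on the right-hand side, it suffices to establish two facts: that $\Gamma'\subseteq(G(P_\infty)\times G(P))\cap H(P_\infty,P)$, and that $\Gamma'$ is the graph of a bijection between $G(P_\infty)$ and $G(P)$, i.e.\ is of the form $\{(\alpha_i,\beta_{\tau(i)})\}$ for a permutation $\tau$. Once both hold, Lemma~\ref{lemma 1} forces $\Gamma'=\Gamma(P_\infty,P)$, which is exactly the assertion.

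First I would pin down the two coordinate projections of $\Gamma'$ and show they biject onto the gap sets. Since $\gcd(m,r)=1$ and $1\le i\le m-1$, the number $ri/m$ is never an integer, so $mr-mj-ri>0$ is equivalent to $1\le j\le r-1-\lfloor ri/m\rfloor$, and the existence of such a $j$ is equivalent to $1\le i\le m-1-\lfloor m/r\rfloor$; thus the stated index set coincides with $\{(i,j):i,j\ge1,\ mr-mj-ri>0\}$. For the second coordinate, the shift $i\mapsto i-1$, $j\mapsto j-1$ matches $i+m(j-1)$ with the description $1+i'+mj'$ of $G(P)$ in Theorem~\ref{pesos}, giving a bijection of the index set onto $G(P)$. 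For the first coordinate I would use the symmetry of $H(P_\infty)=\langle m,r\rangle$, whose Frobenius number is $mr-m-r$: an integer $N\ge 0$ is a gap iff $mr-m-r-N$ is a nongap, i.e.\ $mr-N=m(a+1)+r(b+1)$ for some $a,b\ge0$, i.e.\ $N=mr-mj-ri$ with $i,j\ge1$. Coprimality makes $(i,j)$ unique in range, so the first coordinates biject onto $G(P_\infty)$. Together the two projections exhibit $\Gamma'$ as the graph of a bijection $G(P_\infty)\to G(P)$, supplying the permutation $\tau$ demanded by Lemma~\ref{lemma 1}.

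The core of the argument, and the step I expect to be the main obstacle, is $\Gamma'\subseteq H(P_\infty,P)$: for each $(i,j)$ I must produce a function whose pole divisor is exactly $(mr-mj-ri)P_\infty+(i+m(j-1))P_1$. Let $t\in\{1,\dots,m-1\}$ be the unique solution of $\lambda t\equiv -i\pmod m$, and set $\delta=(i+\lambda t)/m\ge 1$. I claim
$$z_{i,j}=\frac{y^{t}}{(x-\alpha_1)^{j}\,f(x)^{\delta-1}}$$
works. Using Proposition~\ref{divisor} and $\lambda t=\delta m-i$, one computes $v_{P_1}(z_{i,j})=\lambda t-mj-m(\delta-1)=-(i+m(j-1))$, then $v_{P_\infty}(z_{i,j})=mj+rm(\delta-1)-r\lambda t=-(mr-mj-ri)$, and finally $v_{P_k}(z_{i,j})=\lambda t-m(\delta-1)=m-i>0$ at the remaining ramified places $P_2,\dots,P_r$; since the only possible poles of $z_{i,j}$ lie among $P_1,\dots,P_r,P_\infty$, its pole divisor is precisely the required one. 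The delicate point is the corrective factor $f(x)^{-(\delta-1)}$: the naive choice $y^{t}/(x-\alpha_1)^{j-1+\delta}$ already gives the correct pole order at $P_1$ but overshoots at $P_\infty$ by $m(r-1)(\delta-1)$, and $\delta-1=\lfloor \lambda t/m\rfloor$ is exactly the largest power of $f$ one may divide by without creating new poles at $P_2,\dots,P_r$ (where $f^{-(\delta-1)}$ instead yields zeros of order $m-i$). With the correct projections, the bijection, and membership in $H(P_\infty,P)$ all in hand, Lemma~\ref{lemma 1} completes the proof.
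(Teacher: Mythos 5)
Your proof is correct and follows essentially the same route as the paper's: verify that $\Gamma'$ sits inside $G(P_\infty)\times G(P)$ with the required permutation structure, exhibit an explicit monomial in $y$, $f(x)$ and $x-\alpha_1$ realizing each prescribed pole divisor, and invoke Lemma~\ref{lemma 1}. The paper's witness function $y^{a(m-i)}f(x)^{b(m-i)}(x-\alpha_1)^{-j}$ (with $a\lambda+bm=1$) has exactly the same divisor as your $z_{i,j}$, and your more detailed check that the two coordinate projections biject onto the gap sets only makes explicit what the paper compresses into a cardinality count.
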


\begin{proof}
Let $\Gamma'=\{ (mr-mj-ri, i+m(j-1)) \mid  1 \leq i \leq m-1-\lfloor m/r\rfloor, \, 1\leq j \leq r-1- \lfloor ri/m\rfloor \}$.  By  Theorem~\ref{pesos}, we have that $\Gamma'\subseteq G(P_{\infty})
 \times G(P)$ and the cardinality of $\Gamma'$ is $g$.

Without loss of generality, we can assume that $P=P_{1}$. Since $\gcd(m, \lambda)=1$, there exist integers $a$ and $b$ such that $a\lambda +bm=1$. The divisor of the function $z=y^{a(m-i)}f(x)^{b(m-i)}(x-\alpha_1)^{-j}$ is 
$$ (z)=\sum_{s=2}^r (m-i)P_s-(rm-ri-jm)P_\infty -(m(j-1)+i)P_1,$$
and so we conclude that $\Gamma' \subseteq H(P_{\infty}, P_{1})$. By Lemma~\ref{lemma 1}, $\Gamma'= \Gamma(P_{\infty}, P)$.
\end{proof}

The following result illustrates how we can identify pure gaps at $(P_\infty, P)$.

\begin{theorem}\label{puregaps}
Let  $P$ be a totally ramified place in the Kummer extension $F/K$ and $(a,b) \in \mathbb N^2$. Then $(a,b)$ is a pure gap at $(P_\infty, P)$ if and only if, for every  $t\in\{0, \dots , m-1\}$, exactly one of the following two conditions is satisfied:
\begin{enumerate}
\item $\left\lfloor \frac{a-rt}{m} \right\rfloor + \left\lfloor \frac{b+t}{m} \right\rfloor < 0$
\item $\left\lfloor \frac{a-rt}{m} \right\rfloor  + \left\lfloor \frac{b+t}{m} \right\rfloor \geq 0$ and
$\left\lfloor \frac{a-rt}{m}\right \rfloor  + \left\lfloor \frac{b+t}{m} \right\rfloor = \left\lfloor \frac{a-1-rt}{m} \right\rfloor + \left\lfloor \frac{b-1+t}{m} \right\rfloor$.
\end{enumerate}
\end{theorem}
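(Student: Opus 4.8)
The plan is to convert the pure-gap condition into a single equality of Riemann--Roch dimensions and then evaluate both dimensions explicitly via Maharaj's theorem. By the criterion of Homma and Kim recalled before Theorem~\ref{puregapscodes}, namely \cite[Proposition 2.4]{HK}, the pair $(a,b)$ is a pure gap at $(P_\infty,P)$ if and only if
$$\ell(aP_\infty+bP)=\ell\big((a-1)P_\infty+(b-1)P\big).$$
Without loss of generality I would take $P=P_1$. Since $P_\infty$ and $P_1$ are totally ramified, they are the unique places of $F$ above $Q_\infty$ and $Q_1$, so every divisor supported on them is invariant under $\mathrm{Gal}(F/K(x))$ and Theorem~\ref{ThMaharaj} applies to $D=aP_\infty+bP_1$.

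Next I would compute, for each $t\in\{0,\dots,m-1\}$, the restricted divisor $[D+(y^t)]\big|_{K(x)}$ using Proposition~\ref{divisor}. From $(y)=\lambda(P_1+\cdots+P_r)-r\lambda P_\infty$ and the fact that all ramification indices equal $m$, the restriction to $K(x)$ has coefficient $\lfloor(a-rt\lambda)/m\rfloor$ at $Q_\infty$, coefficient $\lfloor(b+t\lambda)/m\rfloor$ at $Q_1$, and coefficient $\lfloor t\lambda/m\rfloor$ at each of $Q_2,\dots,Q_r$. As $K(x)$ has genus $0$, the dimension of $\mathcal{L}$ of a divisor there equals $\max\{0,\deg+1\}$, so each summand in Theorem~\ref{ThMaharaj} contributes $\max\{0,\deg_t+1\}$, where $\deg_t$ is the degree of the restricted divisor. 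Writing $t\lambda=m\lfloor t\lambda/m\rfloor+s_t$ with $0\le s_t\le m-1$ and combining the three floor terms, the quotient $\lfloor t\lambda/m\rfloor$ cancels and one is left with $\deg_t=\lfloor(a-rs_t)/m\rfloor+\lfloor(b+s_t)/m\rfloor$. Because $\gcd(m,\lambda)=1$, the assignment $t\mapsto s_t$ permutes $\{0,\dots,m-1\}$, so after reindexing I obtain the clean formula
$$\ell(aP_\infty+bP)=\sum_{t=0}^{m-1}\max\{0,A_t+1\},\qquad A_t:=\left\lfloor\frac{a-rt}{m}\right\rfloor+\left\lfloor\frac{b+t}{m}\right\rfloor,$$
in which $\lambda$ has disappeared. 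Running the identical computation with $a,b$ replaced by $a-1,b-1$ yields $\ell((a-1)P_\infty+(b-1)P)=\sum_{t}\max\{0,B_t+1\}$ with $B_t:=\lfloor(a-1-rt)/m\rfloor+\lfloor(b-1+t)/m\rfloor$.

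Finally I would compare the two sums termwise. Monotonicity of the floor gives $B_t\le A_t$ for all $t$, hence $\max\{0,B_t+1\}\le\max\{0,A_t+1\}$, so the difference of the two sums is a sum of nonnegative integers; consequently the pure-gap equality holds if and only if $\max\{0,A_t+1\}=\max\{0,B_t+1\}$ for every $t$. A short case analysis on the sign of $A_t$ then completes the proof: if $A_t<0$ then $B_t\le A_t<0$ and both maxima vanish, which is exactly condition~(1); if $A_t\ge0$ then the common value must be $A_t+1>0$, which forces $B_t=A_t$ and is exactly condition~(2); and if $A_t\ge0$ with $B_t<A_t$ the $t$-th term is strictly positive. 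As the two conditions are separated precisely by the sign of $A_t$, they are mutually exclusive, so ``the $t$-th term vanishes'' is equivalent to ``exactly one of (1),(2) holds,'' giving the stated criterion. I expect the one genuine obstacle to be the bookkeeping in the middle step: checking that the three floor contributions combine so that the dependence on $\lambda$ through $\lfloor t\lambda/m\rfloor$ cancels exactly, and that $t\mapsto s_t$ is a true bijection of $\{0,\dots,m-1\}$, which is where $\gcd(m,\lambda)=1$ is indispensable; once the two sum formulas are in hand, the remainder is an elementary termwise comparison.
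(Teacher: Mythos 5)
Your proof is correct and follows essentially the same route as the paper's: the Homma--Kim criterion reduces the pure-gap condition to $\ell(aP_\infty+bP)=\ell((a-1)P_\infty+(b-1)P)$, Maharaj's decomposition (Theorem~\ref{ThMaharaj}) turns each side into a sum of genus-zero Riemann--Roch dimensions, and a termwise comparison with the case split on the sign of $A_t$ finishes the argument exactly as in the paper. If anything, your bookkeeping is slightly more careful: the paper's proof writes $(y)=\sum_{i=1}^r P_i - rP_\infty$ (tacitly taking $\lambda=1$), whereas you verify that for general $\lambda$ the contributions of $\left\lfloor t\lambda/m\right\rfloor$ cancel and the bijection $t\mapsto t\lambda \bmod m$ (valid since $\gcd(m,\lambda)=1$) reindexes the sum into the stated $\lambda$-free form.
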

\begin{proof}
For convenience, we suppose $P=P_1$. From Theorem~\ref{ThMaharaj}, we have
$$\cL(aP_\infty+bP_1)= \bigoplus_{t=0}^{m-1} \cL\left([aP_\infty+bP_1+ (y^t)]\Big |_{K(x)}\right) y^t.$$
Since $(y)=\displaystyle\sum\limits_{i=1}^r P_i - rP_\infty$, it follows that
$$aP_\infty+bP_1+ (y^t)=(a-rt)P_\infty + (b+t)P_1 + \sum\limits_{i=2}^r tP_i.$$
The restriction of this divisor to the rational function field $K(x)$ is
$$[aP_\infty+bP_1+ (y^t)]\Big |_{K(x)}= \left\lfloor \frac{a-rt}{m}\right\rfloor Q_\infty  + \left\lfloor \frac{b+t}{m} \right\rfloor Q_1,$$
given that $\lfloor t/m \rfloor =0$ for all $t=0, \dots, m-1$.
Hence
$$\ell(aP_\infty+bP_1)= \displaystyle\sum\limits_{t=0}^{m-1} \ell\left(\left\lfloor \frac{a-rt}{m}\right \rfloor Q_\infty  + \left\lfloor \frac{b+t}{m} \right\rfloor Q_1\right).$$
In a similar way, we have that
$$\ell((a-1)P_\infty+(b-1)P_1)= \displaystyle\sum\limits_{t=0}^{m-1} \ell \left(\left\lfloor \frac{a-1-rt}{m}  \right\rfloor Q_\infty + \left\lfloor \frac{b-1+t}{m} \right\rfloor Q_1\right).$$
Now $\ell (aP_\infty+bP_1)= \ell((a-1)P_\infty+(b-1)P_1)$ if and only if $\sum\limits_{t=0}^{m-1} \ell_t=0$ where
$$\ell_t= \ell\left(\left\lfloor \frac{a-rt}{m} \right\rfloor Q_\infty  + \left\lfloor \frac{b+t}{m} \right\rfloor Q_1\right)-\ell\left(\left\lfloor \frac{a-1-rt}{m}\right\rfloor Q_\infty  + \left\lfloor \frac{b-1+t}{m} \right\rfloor Q_1\right) \in \{0,1,2\}.$$
Using the Riemann Theorem and the fact that $K(x)$ has genus zero, $\ell_t=0$ if and only if  either
$$\left\lfloor \frac{a-rt}{m} \right\rfloor + \left\lfloor \frac{b+t}{m} \right\rfloor < 0$$
or$$\left(\left\lfloor \frac{a-rt}{m} \right\rfloor  + \left\lfloor \frac{b+t}{m} \right\rfloor \geq 0  \quad \text{and} \quad
\left\lfloor \frac{a-rt}{m} \right\rfloor  + \left\lfloor \frac{b+t}{m} \right\rfloor = \left\lfloor \frac{a-1-rt}{m} \right\rfloor + \left\lfloor \frac{b-1+t}{m} \right\rfloor\right).$$
\end{proof}

As a consequence of Theorem~\ref{puregaps}, we exhibit a family of  pure gaps.

\begin{proposition}\label{puregap1}
Let $F/K$ be a  Kummer extension defined by $y^{q^\ell+1}=f(x)$ where $f(x)$ is a separable polynomial of degree $q$ and $q>3$. Then $(q^{\ell+1}-2q^{\ell}-2,1)$ is a pure gap at $(P_{\infty},P)$.
\end{proposition}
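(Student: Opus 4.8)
The plan is to apply Theorem~\ref{puregaps} directly with $m=q^\ell+1$, $r=q$, $\lambda=1$, and the candidate pair $(a,b)=(q^{\ell+1}-2q^\ell-2,\,1)$. The first move is a change of notation that makes the arithmetic transparent: since $q^{\ell+1}=q(m-1)$ and $2q^\ell=2(m-1)$, one rewrites $a=(q-2)m-q$. In particular $a>0$ because $q>3$, so $(a,b)\in\N^2$ as the theorem requires. Setting $S_t=\lfloor(a-rt)/m\rfloor+\lfloor(b+t)/m\rfloor$ and $S_t'=\lfloor(a-1-rt)/m\rfloor+\lfloor(b-1+t)/m\rfloor$, the two alternatives in Theorem~\ref{puregaps} are mutually exclusive (one forces $S_t<0$, the other $S_t\ge 0$), so ``exactly one holds'' is precisely the condition that each $\ell_t$ in that proof vanishes. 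Hence it suffices to check, for every $t\in\{0,\dots,m-1\}$, that either $S_t<0$ or else $S_t\ge 0$ together with $S_t=S_t'$.

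Next I would evaluate the four floor terms. With $b=1$ one has $\lfloor(b+t)/m\rfloor=0$ for $0\le t\le m-2$ and $=1$ for $t=m-1$, while $\lfloor(b-1+t)/m\rfloor=\lfloor t/m\rfloor=0$ for all $t$ in range. For the $P_\infty$-coordinate, substituting $a=(q-2)m-q$ and $rt=qt$ and using $\lfloor -x\rfloor=-\lceil x\rceil$ gives $\lfloor(a-rt)/m\rfloor=(q-2)-\lceil q(1+t)/m\rceil$ and $\lfloor(a-1-rt)/m\rfloor=(q-2)-\lceil(q(1+t)+1)/m\rceil$.

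The crux is then the difference $S_t-S_t'$ on the range $0\le t\le m-2$, which reduces to the ceiling increment $\lceil(q(1+t)+1)/m\rceil-\lceil q(1+t)/m\rceil$. This increment equals $1$ exactly when $m\mid q(1+t)$ and $0$ otherwise; and since $\gcd(m,q)=\gcd(q^\ell+1,q)=1$, the divisibility $m\mid q(1+t)$ is equivalent to $m\mid(1+t)$, which is impossible because $1\le 1+t\le m-1$. Therefore $S_t=S_t'$ for every $t\le m-2$, so for those $t$ either $S_t\ge 0$ (putting us in the second alternative) or $S_t<0$ (the first alternative) holds, as needed. The boundary value $t=m-1$ is handled by hand: there $1+t=m$, so $S_{m-1}=(q-2)-\lceil qm/m\rceil+1=(q-2)-q+1=-1<0$, which satisfies the first alternative.

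I expect the only real obstacle to be organizing the floor/ceiling bookkeeping cleanly rather than any conceptual difficulty. The single genuine idea is the observation that passing from $(a,b)$ to $(a-1,b-1)$ perturbs each summand only through the increment $\lceil(n+1)/m\rceil-\lceil n/m\rceil$, which coprimality forces to vanish on the relevant range; once this is in place the verification collapses to the single boundary case $t=m-1$, and Theorem~\ref{puregaps} then certifies that $(q^{\ell+1}-2q^\ell-2,\,1)$ is a pure gap at $(P_\infty,P)$.
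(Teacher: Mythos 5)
Your proof is correct, and while it rests on the same key result as the paper --- Theorem~\ref{puregaps} applied with $m=q^\ell+1$, $r=q$, $b=1$ --- the verification is organized in a genuinely different way. The paper partitions $\{0,\dots,q^\ell-1\}$ into the $q$ blocks $\{iq^{\ell-1},\dots,(i+1)q^{\ell-1}-1\}$, computes $\lfloor(a-qt)/m\rfloor=q-3-i$ explicitly on each block, determines exactly which $t$ make the degree sum nonnegative (the blocks with $i\le q-3$), and then repeats a similar floor computation for $a-1$ and $t$ on that subrange to confirm condition (2). You bypass all of this by proving the single uniform identity $S_t=S_t'$ for every $t\le m-2$: writing $a=(q-2)m-q$, the passage from $(a,b)$ to $(a-1,b-1)$ perturbs the sum only through the ceiling increment $\lceil(q(1+t)+1)/m\rceil-\lceil q(1+t)/m\rceil$, which is nonzero only when $m\mid q(1+t)$, and this is ruled out by $\gcd(q^\ell+1,q)=1$ and $1\le 1+t\le m-1$. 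With $S_t=S_t'$ in hand, the dichotomy of Theorem~\ref{puregaps} holds automatically whatever the sign of $S_t$, so you never need to locate the nonnegative cases at all; only the boundary value $t=m-1$ requires a direct check ($S_{m-1}=-1<0$). I verified your floor/ceiling identities and they are consistent with the paper's explicit values (e.g.\ both give $-2$ for $\lfloor(a-qt)/m\rfloor$ at $t=q^\ell$). Your route is shorter and avoids the block bookkeeping entirely; moreover, since the cancellation argument only uses $a\equiv -q\pmod{m}$, it is the more portable of the two. What the paper's computation buys in exchange is the explicit table of floor values, which records precisely where the criterion is tight and is the sort of data one wants when searching for further pure gaps, as in the examples of Section 5.
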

\begin{proof}
Let $a:=q^{\ell+1}-2q^{\ell}-2$.  We start by evaluating $\left\lfloor(a-qt)/(q^\ell+1)\right\rfloor$ for
$t\in\{0,\ldots,q^{\ell}\}$. We partition the set $\{0,\ldots,q^\ell-1\}$ as
$$\bigcup\limits_{i=0}^{q-1} \{iq^{\ell-1},\ldots,(i+1)q^{\ell-1}-1\}.$$
If $t\in\{0,\ldots,q^\ell-1\}$ then
\[a-qt =(q-3-i)(q^\ell+1)+(i+1)(q^{\ell}+1)-qt-q.\]
Since $iq^{\ell-1}\le t \le (i+1)q^{\ell-1}-1$ for some $0\le i \le q-1$, it follows that $-(i+1)q^\ell+q \le -qt \le -iq^\ell$ and so
$0 < (i+1)(q^\ell+1)-qt-q <q^\ell+1$.  For $t=q^\ell$, we obtain that $a-qt = -2(q^\ell+1)$. Therefore:
\begin{equation}\label{A}
\left\lfloor\dfrac{a-qt}{q^\ell+1}\right\rfloor=
\begin{cases}
q-3-i, &\text{if } t = iq^{\ell-1},\ldots,(i+1)q^{\ell-1}-1 \text{ with } i = 0,\ldots,q-1 \\
-2, &\text{if } t=q^\ell.
\end{cases}
\end{equation}
On the other hand,
\begin{equation}\label{B}
\left\lfloor\dfrac{1+t}{q^\ell+1}\right\rfloor=
\begin{cases}
0, &\text{if } t = 0,\ldots, q^\ell-1\\
1, &\text{if } t = q^\ell.
\end{cases}
\end{equation}
By combining (\ref{A}) and (\ref{B}), we get
\begin{equation*}\label{AplusB}
\left\lfloor\dfrac{a-qt}{q^\ell+1}\right\rfloor+\left\lfloor\dfrac{1+t}{q^\ell+1}\right\rfloor=
\begin{cases}
q-3-i, &\text{if } t = iq^{\ell-1},\ldots,(i+1)q^{\ell-1}-1 \text{ with } i = 0,\ldots,q-1 \\
-1, &\text{if } t=q^\ell.
\end{cases}
\end{equation*}
This sum is greater than or equal to zero if and only if $t\in\bigcup\limits_{i=0}^{q-3} \{iq^{\ell-1},\ldots,(i+1)q^{\ell-1}-1\}$. In this case,
by using a similar argument, we can show that
\[\left\lfloor\dfrac{a-1-qt}{q^\ell+1}\right\rfloor=q-3-i\qquad\text{and}\qquad  \left\lfloor\dfrac{t}{q^\ell+1}\right\rfloor=0.\]
By Theorem~\ref{puregaps}, we conclude that $(q^{\ell+1}-2q^{\ell}-2,1)$ is a pure gap at $(P_{\infty},P)$.
\end{proof}


\section{Examples of Codes in Kummer Extensions}

In this section we show that our results can yield examples of AG codes that have the same or better parameters than the corresponding ones in the MinT's Tables~\cite{MinT}. 

\begin{example}
The curve $y^{q^\ell+1}=x^q+x$ over $\mathbb{F}_{q^{2\ell}}$ with $\ell$ odd has $q^{2\ell +1}+1$ rational points and genus $g=q^\ell(q-1)/2$. Let $P_{a,b}$ denote the common zero of $x-a$ and $y-b$ where $a,b \in \mathbb{F}_{q^{2\ell}}$.  Let
$b_{1}, \ldots, b_{q^\ell+1}$ be the solutions in $\mathbb{F}_{q^{2\ell}}$ to $y^{q^\ell+1} = a^q + a \neq 0$.
The divisor of $x-a$ is
$$(x-a)=
\begin{cases}
P_{a,b_1}+ \cdots + P_{a,b_{q^\ell + 1}} - (q^\ell + 1) P_{\infty},  &\text{ if } a^q + a \neq 0 \\
(q^\ell + 1)P_{a,0} - (q^\ell + 1) P_{\infty}, &\text{ otherwise}.
\end{cases}
$$
The divisor of $y-b$ is
$
(y-b)= P_{a_1,b}+ \cdots + P_{a_q,b} - q P_{\infty}
$
where $a_{1}, \ldots, a_{q}$ are the solutions in $\mathbb{F}_{q^{2\ell}}$ to $x^q + x = b^{q^\ell + 1}$. 
Consider the divisor $D$ as the sum of all rational points with exception of $P_\infty$ and $P_{0,0}$. It is clear that $n=\deg(D)=q^{2\ell+1}-1$. By Proposition~\ref{puregap1}, we have that  $((q-2)(q^\ell+1)-q, 1)$ is a pure gap at $(P_\infty,P_{0,0})$.
Take $G=(2(q^{\ell +1}-2q^\ell-2)-1)P_\infty + P_{0,0}$. By Theorem 2.1, the minimum distance of  $C_{\Omega}$ satisfies $d_\Omega \geq q^\ell(q-3)$. For $q>3$, we have that $2g-2=q^{\ell+1}-q^\ell-2 < \deg G<n=q^{2\ell+1}-1$, and the dimension of the code is $k=n+g-1-\deg G=(2q^{2\ell+1}-3q^{\ell+1}+7q^\ell+4)/2$.
Hence, the code $C_{\Omega}$ has parameters $[q^{2\ell+1}-1, (2q^{2\ell+1}-3q^{\ell+1}+7q^\ell+4)/2, \geq q^\ell(q-3)]$ over $\mathbb{F}_{q^{2\ell}}$.
Consider the differential
\[\eta=\dfrac{y}{\prod\limits_{i=1}^{q-3}(x-a_i)}dy\]
where  $a_i^q+a_i= 0$ with $a_i\in \mathbb{F}_{q^{2\ell}}\setminus\{0\}$.
By~\cite[Remark 4.3.9]{Sti}, we have $(dy)=(2g-2)P_\infty=(q^{\ell+1}-q^\ell-2)P_\infty$ and
\begin{eqnarray*}
(\eta) &=& P_{0,0}+P_{a_1,0}+ \cdots + P_{a_{q-1},0} - q P_{\infty} - \left(\sum\limits_{i=1}^{q-3}\left[(q^\ell+1)P_{a_i,0} - (q^\ell + 1) P_{\infty}\right]\right)\\
&& +(q^{\ell+1}-q^\ell-2)P_\infty\\
&=&(2q^{\ell+1}-4q^\ell-5)P_\infty +P_{0,0}+ P_{a_{q-2},0} + P_{a_{q-1},0} - q^\ell\sum\limits_{i=1}^{q-3} P_{a_i,0}.
\end{eqnarray*}
Since $\eta\in \Omega(G-D)$ and $\eta$ has $q^\ell(q-3)$ poles, it follows that $d_\Omega = q^\ell(q-3)$.
For $\ell=1$, the values $q=5$ and $q=7$ produce the codes with parameters $[124, 107 ,10]$ over $\mathbb{F}_{25}$ and $[342, 296, 28]$ over $\mathbb{F}_{49}$, respectively.
The first code attains the best known minimum distance and the second one improves the best known minimum distance.
\end{example}

\begin{example}
Let $q$ be a prime power, $n$ be an even integer, and $m$ be a divisor of $q^n-1$ such that $\gcd(m, q^{n/2}-1) = 1$.
In~\cite[Example 11]{GQ} Garcia and Quoos considered the curve defined by the affine equation
$
y^m=(x^{q^{n/2}}-x)^{q^{n/2}-1}
$
over $\mathbb{F}_{q^n}$, and showed that the genus and the number of $\mathbb{F}_{q^n}$-rational points of this curve are
$g=(q^{n/2}-1)(m-1)/2$ and $N=(q^n-q^{n/2})m+(q^{n/2}+1)$,
respectively.
When $n=2$, $m=3$ and $q=5$, we have the curve $y^3=x^5-x$ with $g=4$ and $N=66$ over $\mathbb{F}_{25}$. By Theorem~\ref{pesos}, we have that $H(P_\infty)=\langle 3,5 \rangle$, and so the one-point codes $C_{\mathcal{L}}$ with $G=5P_\infty$ and $G=6P_\infty$ have parameters $[65,3,60]$ and $[65,4,59]$, respectively. Both these examples improve the best known minimum distance. We used  Magma~\cite{Magma} in this example to calculate the parameters of the codes.

\end{example}

 \begin{example}\label{ex1maximal}
The curve defined by $y^{q+1}=\sum\limits_{i=1}^t x^{q/2^i}$ with $q=2^t$ over $\mathbb{F}_{q^2}$ and genus $g=q(q-2)/4$ is a maximal curve~\cite{AT}. In this case, we have that $m=q+1$ and $r=q/2$. For a totally ramified place $P\neq P_\infty$, an application of Theorem~\ref{casobom} gives that $H(P)=\langle q-1,q,q+1\rangle$ and $H(P_\infty)=\langle q+1,q/2\rangle$.
When $t=3$, we have the  maximal curve $y^9=x^4+x^2+x$ over $\mathbb{F}_{64}$ with $257$ rational points. As $H(P_\infty)=\langle 4,9\rangle$ and $H(P)=\langle7,8,9\rangle$, we have that
$G(P_\infty)=\{1,2,3,5,6,7,10,11,14,15,19,23\}$,
$G(P)=\{1,2,3,4,5,6,10,11,12,13,19,20\}$ and
$\Gamma(P_\infty,P)=\{(1,20),(2,13),(3,6),(5,19),(6,12),$ $ (7,5),(10,11),(11,4),(14,10),(15,3),(19,2),(23,1)\}.$
From Theorem~\ref{puregaps}, we can check that $(10, 10)$ is a pure gap. By using Theorem~\ref{puregapscodes} and taking $t_1=t_2=0$ and $(\beta,\gamma)=(10,10)$, we obtain $G=19P_\infty+19P$. In this way, we construct two-point codes  $C_{\Omega}$ with parameters $[255,228,\geq 18]$. The minimum distance of this code exceeds the minimum distance of the best known linear code over $\mathbb{F}_{64}$ with the same dimension and length by two units. We recall that, for an $[n,k,d]$ linear code and each non-negative integer $s<k$, there exists a  linear code with parameters $[n-s,k-s,d]$. So, for $[255-s,228-s,\geq 18]$ with $s\in \{0,1,2,\ldots,29\}$, we obtain $30$ new linear codes with the best known parameters.
\end{example}

\section*{Acknowledgements}
The authors would like to thank the anonymous referees for their useful comments and important corrections on an earlier version of this work, including generalizing a previous statement of Theorem 4.3.

 \end{document}